  \theoremstyle{plain}
  \newtheorem*{conjecture*}{Conjecture}
\theoremstyle{plain}
\newtheorem{thm}{Theorem}
  \theoremstyle{plain}
  \newtheorem{lem}[thm]{Lemma}
  \theoremstyle{plain}
  \newtheorem{prop}[thm]{Proposition}
\newcounter{EQNR}
\begin{document}

\title{Dynamics of Hilbert nonexpansive maps}

\author{Anders Karlsson%
\thanks{Supported in part by the Swiss NSF grant 200021 132528/1.%
}}
\maketitle
\begin{abstract}
In his work on the foundations of geometry, Hilbert observed that
a formula which appeared in works by Beltrami, Cayley, and Klein,
gives rise to a complete metric on any bounded convex domain. Some
decades later, Garrett Birkhoff and Hans Samelson noted that this
metric has interesting applications, when considering certain maps
of convex cones that contract the metric. Such situations have since
arisen in many contexts, pure and applied, and could be called nonlinear
Perron-Frobenius theory. This note centers around one dynamical aspect
of this theory.
\end{abstract}

\section{Introduction}

In a letter to Klein, published in \emph{Mathematisches Annalen} in
1895, Hilbert noted that a formula which Klein observed giving the
projective model of the hyperbolic plane, provides a metric on any
bounded convex domain. The formula is in terms of the cross-ratio
and appeared earlier in works of Cayley and Beltrami, although not
in this general context. Hilbert's concern lied in the foundation
of geometry, going back to the famous Euclid's postulates, and Hilbert's
fourth problem asking about geometries where straight lines are geodesics.
Nowadays, with an extensive development of metric geometry, we consider
these Hilbert geometries to be beautiful concrete examples of metric
spaces interpolating between (certain) Banach spaces and the real
hyperbolic spaces. To a certain limited extent it could also be viewed
as a simpler analogue to Kobayashi's (pseudo) metric in several variable
complex analysis, such as the Teichmüller metric.

It is then all the more remarkable that this metric discovered and
discussed from a foundational geometric point of view (by Hilbert
and Busemann) is a highly useful metric for other mathematical sciences,
providing yet another example of research driven by purely theoretical
curiosity leading to unexpected application. This was probably first
noticed, seemingly independently and simultaneously, by G. Birkhoff
\cite{Bi57} and H. Samelson \cite{Sa57}. The Hilbert metric is defined
via the cross-ratio from projective geometry. By virtue of being invariant
it is therefore not surprising that it is useful in the study of groups
of projective automorphisms, see for example \cite{Q10}. A philosophical
explanation for the abundance of applications beyond group theory
is that positivity (say of population sizes in biology, concentrations
in chemistry or prices in economics) is ubiquitous and that many even
nonlinear maps contract Hilbert distances. The contraction property
leads to fixed point theorems \index{fixed point theorem}which in
turn are a main source of existence of solutions to equations. Recently
a very good book has been published on this subject, nonlinear Perron-Frobenius
theory\index{Perron-Frobenius theory}, written by Lemmens and Nussbaum
\cite{LeN12}.

The following conjecture was made independently by Nussbaum and this
author about a decade ago:
\begin{conjecture*}
Let $C$ be a bounded convex domain in $\mathbb{R\mathrm{^{N}}}$
equipped with Hilbert's metric $d$. Let $f:C\rightarrow C$ be a
1-Lipschitz map. Then either there is a fixed point of $f$ in $C$
or the forward orbits $f^{n}(x)$ accumulate at one unique closed
face. 
\end{conjecture*}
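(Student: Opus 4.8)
The plan is to treat separately the two regimes ``every orbit is bounded'' and ``orbits are unbounded'' (which regime occurs does not depend on the starting point since $f$ is nonexpansive), and to feed each into known machinery: a Chebyshev-center argument with Brouwer's theorem for the first, and Karlsson's metric-functional theorem together with Walsh's description of the horofunction boundary of the Hilbert geometry for the second. Fix $x_{0}\in C$, put $x_{n}=f^{n}(x_{0})$ and $D=d(x_{0},x_{1})$. Two elementary observations will be used throughout: the consecutive displacements $d(x_{n+1},x_{n})=d(f(x_{n}),f(x_{n-1}))\le d(x_{n},x_{n-1})$ form a non-increasing sequence, and by subadditivity the drift $\ell=\lim_{n}\tfrac{1}{n}d(x_{0},x_{n})$ exists and is $\ge 0$.

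First I would show that a bounded orbit forces a fixed point, so that under the hypothesis ``no fixed point'' one may assume the orbit is unbounded. If $\{x_{n}\}$ is bounded then, since $(C,d)$ is proper (closed Hilbert balls are compact), the $\omega$-limit set $\Omega$ is nonempty and compact, $f(\Omega)=\Omega$, and consequently $f|_{\Omega}$ is a surjective isometry of $\Omega$. Let $r_{0}=\min_{c\in C}\sup_{y\in\Omega}d(c,y)$ (the minimum is attained because $\sup_{y\in\Omega}d(\cdot,y)$ blows up at $\partial C$) and let $Z=\bigcap_{y\in\Omega}\overline{B}_{d}(y,r_{0})$. Using the standard fact that Hilbert balls are convex subsets of $\mathbb{R}^{N}$, $Z$ is a nonempty compact convex set, and it is $f$-invariant: for $c\in Z$ and any $y\in\Omega$, writing $y=f(y')$ with $y'\in\Omega$ gives $d(f(c),y)=d(f(c),f(y'))\le d(c,y')\le r_{0}$. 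Brouwer's fixed point theorem applied to the continuous self-map $f|_{Z}\colon Z\to Z$ then yields a fixed point of $f$ in $C$.

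For the unbounded orbit with positive drift, $\ell>0$, Karlsson's theorem on nonexpansive maps provides a horofunction $h$ of $(C,d)$ --- a pointwise limit of functions $y\mapsto d(y,z_{k})-d(x_{0},z_{k})$ with $d(x_{0},z_{k})\to\infty$ --- such that $h(x_{n})\le-(\ell-\varepsilon)n$ for every $n$ (given $\varepsilon\in(0,\ell)$), so in particular $h(x_{n})\to-\infty$. Hence every accumulation point of $\{x_{n}\}$ in $\overline{C}$ lies in $\bigcap_{t>0}\overline{\{y\in C:h(y)\le-t\}}$. Here I would invoke Walsh's classification of the horofunction boundary of the Hilbert geometry, by which each such $h$ is attached to a well-defined closed proper face $F_{h}\subseteq\partial C$ and its sublevel sets contract, in the Euclidean sense, into $F_{h}$; this confines the whole accumulation set to $F_{h}$, and the unique smallest face containing that closed accumulation set is the face demanded by the conjecture. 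Independence of the face from the starting point is then easy, since two orbits stay at bounded Hilbert distance and points of $\partial C$ at finite limiting Hilbert distance necessarily lie in a common face.

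The two serious obstacles, where I expect the actual difficulty to concentrate, are the following. First, an unbounded orbit of zero drift --- the parabolic isometries of real hyperbolic space show this occurs --- is not handled by the elementary form of Karlsson's theorem, which for $\ell=0$ only gives $h(x_{n})\le 0$; one must strengthen this to an honestly escaping horofunction, presumably using properness of $(C,d)$ and the monotonicity of the displacements, and along the way exclude the pathology that $d(x_{0},x_{n})$ oscillates with $\liminf<\infty$ and $\limsup=\infty$, which would place an accumulation point in the interior of $C$. Second, and more delicate, is uniqueness of the face: unlike an isometry, a nonexpansive $f$ need not converge to a single horofunction, so one must prove that every horofunction arising as a subsequential limit of $d(\cdot,x_{n_{k}})-d(x_{0},x_{n_{k}})$ is attached to the same face --- equivalently, that the accumulation set never spreads over $\partial C$ into a position whose only enclosing face is all of $\overline{C}$. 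A plausible route is to exploit that the orbit is $D$-chained together with the combinatorics of faces-of-faces in $\partial C$ (and Walsh's distinction between Busemann and non-Busemann boundary points) to forbid migration between incomparable faces, first settling the strictly convex case (where it reduces to Beardon's theorem) and the polyhedral case, and then treating a general $C$ by approximation.
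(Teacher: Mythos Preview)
The statement you are attempting is the paper's central open \emph{Conjecture}; the paper does not prove it in full and says explicitly that ``what is not quite settled yet is whether the limit set necessarily belongs to the closure of only one face.'' There is therefore no complete proof to compare against, only the partial results the paper surveys.

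Where your proposal is complete, it matches the paper. Your bounded-orbit argument via the Chebyshev centre and Brouwer recovers the fixed-point result the paper attributes to Nussbaum. Your positive-drift argument ($\ell>0$) is precisely the paper's linear-drift theorem: Proposition~\ref{pro:K} furnishes a horofunction $h$ with $h(f^{k}(x))\le-\tau k$, and since the horoballs are convex their intersection, which contains every accumulation point, is confined to a single closed face. You are right to flag that these two arguments exhaust only the easy regimes.

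The two obstacles you name at the end are exactly why the conjecture remains open, and your proposal does not close them. For the zero-drift unbounded case the paper offers only the monotone-orbit theorem and Nussbaum's opposite extreme $\liminf_{k} d(f^{k}x,f^{k+1}x)=0$; no general strengthening of Proposition~\ref{pro:K} for sublinear escape is known. (The oscillation pathology you worry about, $\liminf<\infty$ with $\limsup=\infty$, is in fact already excluded by Ca\l{}ka's theorem, cited in the paper, so that is not where the difficulty lies.) For face uniqueness the paper's best general statement is Theorem~\ref{thm:KN}, which guarantees only a boundary point $z$ joined by boundary segments to every limit point --- a star configuration, not necessarily a single face --- and the paper combines this with Beardon's invariant horoballs to settle only dimension~$2$. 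Your closing suggestion to pass from the strictly convex and polyhedral cases to general $C$ by approximation is not pursued in the paper and is unlikely to work as stated: the Hilbert metric, the face structure of $\partial C$, and the dynamics of a given $f$ do not behave continuously under such perturbations.
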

In view of Nussbaum \cite{N88} we know the first part, namely if
the orbits do not accumulate on the boundary of $C$ then indeed there
is a fixed point. That the accumulation set do not depend much on
which point $x$ in $C$ that is iterated is not difficult to see,
but what is not quite settled yet is whether the limit set necessarily
belongs to the closure of only one face. In this brief text I will
focus on this issue and explain the known partial results. Taken together
they provide convincing support for the conjecture as they in particular
treat two basic and opposite situations, the strictly convex case
(Beardon) and the polyhedral case (Lins). A general result of Noskov
and the author shows that the conjecture in general is at least almost
true, see below.

As Volker Metz originally indicated to me, in practice it is often
hard to show that orbits are bounded (needed for establishing the
existence of a fixed point), but arguing by contradiction assuming
instead that the orbits are unbounded, then if one can show the existence
of a limiting face, this sometimes leads to the desired contradiction.
There are several studies on the bonded orbit case, which is not considered
here, for that we refer just to \cite{Le11,LeW11,LeN12,LeN13} and
references therein.

There is a wealth of applications of the Hilbert metric, see \cite{N88,N07,LeN12},
let me just add two papers, one on decay of correlation \cite{L95}
and another on growth of groups \cite{BE11}. A final remark is that
from a certain point of view, the conjecture is somewhat reminiscent
of the notorious invariant subspace problem in functional analysis,
although surely much less difficult.

\subsection*{Acknowledgements.}

I have at various times benefited from discussions on this general
topic with Alan Beardon, Gena Noskov, Volker Metz, Thomas Foertsch,
Roger Nussbaum, Brian Lins, Bruno Colbois, Constantin Vernicos, Enrico
Le Donne, and Bas Lemmens.

\section{Basic numbers associated to semicontractions}

Let $(X,d)$ be a metric space. A \emph{semicontraction} \index{semicontraction}$f:X\rightarrow X$
is a map such that \[
d(f(x),f(y))\leq d(x,y)\]
 for every $x,y\in X$. Other possible terms for this are 1-Lipschitz,
non-expanding or nonexpansive maps \index{nonexpansive map}. Since
$f$ is a self-map we may study it by iterating it.

 If one orbit $\{f^{n}(x)\}_{n>0}$ is bounded, then every orbit is bounded since \[ d(f^{n}(x),f^{n}(y))\leq d(x,y) \] for every $n$. Therefore having bounded or unbounded orbits is a well defined notion. We use the same terminology for subsets, semigroups or groups of semicontractions. For example, if a semigroup of semicontractions fixes a point, then trivially any orbit is bounded.
The \emph{translation number of}\index{translation number} $f$ is 
\[ \tau_{f}=\lim_{n\rightarrow\infty}\frac{1}{n}d(x,f^{n}(x)), \] which exists (and is independent of $x$) by the following lemma:
\begin{lem}
Let $a_{n}$ be a subadditive sequence of real numbers, that is $a_{n+m}\leq a_{n}+a_{m}.$ Then the following limit exists \[ \lim_{n\rightarrow\infty}\frac{1}{n}a_{n}=\inf_{m>0}\frac{1}{m}a_{m} \in\mathbb{R\cup\{-\infty\}}. \]\end{lem}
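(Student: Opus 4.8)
The plan is to prove Fekete's classical subadditivity lemma by the standard division‑with‑remainder argument. Set $L := \inf_{m>0} a_m/m$, which lies in $[-\infty,\infty)$ because $a_1/1$ is a finite real upper bound for the infimum. Since $a_n/n \ge L$ for every $n>0$ by definition of $L$, one immediately obtains $\liminf_{n\to\infty} a_n/n \ge L$. Hence the entire content of the lemma is the reverse estimate $\limsup_{n\to\infty} a_n/n \le L$, after which $\liminf \ge L \ge \limsup$ forces the limit to exist and equal $L$.

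To establish the reverse estimate, fix an arbitrary $m>0$; I will show $\limsup_{n\to\infty} a_n/n \le a_m/m$, and since $m$ is arbitrary this gives $\limsup \le \inf_{m>0} a_m/m = L$. For each $n>m$ perform Euclidean division $n = qm + r$ with integer $q\ge 1$ and remainder $r\in\{1,\dots,m\}$ (taking the remainder in $\{1,\dots,m\}$ rather than $\{0,\dots,m-1\}$ spares us from having to interpret $a_0$). Iterating the subadditivity inequality $q-1$ times, namely $a_{km+r}\le a_{(k-1)m+r}+a_m$, yields $a_n = a_{qm+r} \le q\,a_m + a_r$.

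Dividing by $n$ gives $a_n/n \le (qm/n)(a_m/m) + a_r/n$. As $n\to\infty$ we have $q\to\infty$ and $qm/n = 1 - r/n \to 1$, while $a_r$ takes values in the finite set $\{a_1,\dots,a_m\}$ and is therefore bounded, so $a_r/n\to 0$. Taking the $\limsup$ of both sides produces $\limsup_{n\to\infty} a_n/n \le a_m/m$, as required. Note that this passage to the limit is performed termwise and so is valid regardless of the sign of $a_m/m$ and regardless of whether $L$ is finite or $-\infty$: one never needs a clean inequality valid for each individual $n$, only the limiting behaviour of the two summands.

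The only points demanding any care — and the closest thing to an obstacle in an otherwise routine argument — are (i) confirming that the remainder term really vanishes, which rests solely on the fact that $a_1,\dots,a_m$ are finitely many finite numbers, hence bounded; and (ii) handling the degenerate case $L=-\infty$, in which the same computation gives $\limsup_{n\to\infty} a_n/n \le a_m/m$ for every $m$, and letting $m$ vary makes the right‑hand side tend to $-\infty$, so that $a_n/n\to -\infty = L$. Combining (i) and (ii) with the elementary bound $\liminf_{n\to\infty} a_n/n \ge L$ completes the proof that $\lim_{n\to\infty} a_n/n$ exists and equals $\inf_{m>0} a_m/m$.
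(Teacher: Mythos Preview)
Your proof is correct and follows essentially the same division-with-remainder argument as the paper: both write $n=qm+r$, iterate subadditivity to get $a_n\le q\,a_m+a_r$, and observe that the remainder contribution vanishes as $n\to\infty$. The only cosmetic difference is that the paper first fixes $M$ with $a_M/M$ within $\varepsilon$ of the infimum and then sandwiches $a_n/n$, whereas you bound $\limsup a_n/n\le a_m/m$ for arbitrary $m$ and then take the infimum; your choice of remainder in $\{1,\dots,m\}$ to avoid an undefined $a_0$ is also a nice touch absent from the paper.
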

\begin{proof}
Given $\varepsilon>0,$ pick $M$ such that $a_{M}/M\leq\inf a_{n}/n$ +$\varepsilon.$ Decompose $n=k_{n}M+r_{n},$ where $0\leq r_{n}<M.$ Hence $k_{n}/n$ $\rightarrow1/M.$ Using the subadditivity and considering $n$ big enough ($n>N(\varepsilon))$ \begin{align*} \inf_{m>0}\frac1ma_{m}  &  \leq\frac1na_{n}\leq\frac1na_{k_{n}M+r_{n}} \leq\frac1n(k_{n}a_{M}+a_{r_{n}})\\ &  \leq\frac1Ma_{M}+\varepsilon\leq\inf_{m>0}\frac1ma_{m}+2\varepsilon. \end{align*} Since $\varepsilon$ is at our disposal, the lemma is proved.
\end{proof}

Furthermore we define \[ \delta_{f}(x)=\lim_{n\rightarrow\infty}d(f^{n}(x),f^{n+1}(x)), \] which exists because it is the limit of a bounded monotone sequence. Finally, let \[ D_{f}=\inf_{x\in X}d(x,f(x)) \] be \emph{the minimal displacement}, sometimes also called the translation length.
Since \[ d(x,f^{n}(x))\leq d(x,f(x))+...+d(f^{n-1}(x),f^{n}(x))\leq nd(x,f(x)) \] \newline we have \[ 0\leq\tau_{f}\leq D_{f}\leq\delta_{f}(x). \] If $f$ is an isometry, then $\delta_{f}$ is constant along every orbit, which means that typically $\delta_{f}(x)>D_{f}$. Moreover, it might happen that $0=\tau_{f}<D_{f}$, for example if $f$ has finite order and $X$ is an orbit.

A metric space is \emph{proper }if every closed and bounded subset
is compact. A theorem of Calka \cite{Ca84} asserts that provided
that $X$ is proper, if an orbit is unbounded, then it actually escapes
every bounded set, so that $d(f^{n}x,x)\rightarrow\infty$ as $n\rightarrow\infty$
for any $x$. This applies to the Hilbert metric case that we consider
here. In this case this fact was independently discovered by Nussbaum
\cite{N88}. For non-proper spaces this is not true in general.

\section{Horofunctions\index{horofunction}}

Horofunctions and horospheres first appeared in noneuclidean geometry and complex analysis in the unit disk or upper half plane. Consider the unit disk $D$ in the complex plane. The metric with constant Gaussian curvature $-1$ is 
$$ ds=\frac{2\left\vert dz\right\vert }{1-\left\vert z\right\vert ^{2}}\,\,\text{ or }\,\,d(0,z)=\log\frac{1+\left\vert z\right\vert }{1-\left\vert z\right\vert }. $$

It is called the Poincar\'{e} metric and for a point $\zeta\in\partial D$ one has the horofunction
$$h_{\zeta}(z)=\log\frac{\left\vert \zeta-z\right\vert ^{2}}{1-\left\vert z\right\vert ^{2}}. $$ 
These appear explicitly or implicitly in for example the Poisson formula in complex analysis, Eisenstein series, and the Wolff-Denjoy theorem.
An abstract definition was later introduced by Busemann who defines the \emph{Busemann function} associated to a geodesic ray $\gamma$ to be the function 
$$ h_{\gamma}(z)=\lim_{t\rightarrow\infty}d(\gamma(t),z)-t. $$ 
Note here that the limit indeed exists in any metric space, since the triangle inequality implies that the expression on the right is monotonically decreasing and bounded from below by $-d(z,\gamma(0))$. The convergence is moreover uniform if $X$ is proper as can be seen from a 3$\varepsilon$-proof using the compactness of closed balls.

Horoballs are sublevel sets of horofunctions $h(\cdot)\leq C$. In euclidean geometry horoballs are halfspaces and in the Poincaré disk model of the hyperbolic plane horoballs, or horodisks in this case, are euclidean disks tangent to the boundary circle.

There is a more general definition probably first considered by Gromov around 1980. Namely, let $X$ be a complete metric space and let $C(X)$ denote the space of continuous real functions on $X$ equipped with topology of uniform convergence on bounded subsets. Fix a base point $x_{0}\in X$. Consider now the map $\Phi:X\rightarrow C(X)$ defined by \[ \Phi:z\mapsto d(z,\cdot)-d(z,x_{0}). \] (A related map was considered by Kuratowski \cite{K48} and independently K. Kunugui \cite{Ku35} in the 1930s.) We will sometimes denote by $h_{z}$ the function $\Phi(z).$ Note that every $h_{z}$ is 1-Lipschitz because 
$$ \left\vert h_{z}(x)-h_{z}(y)\right\vert =\left\vert d(z,x)-d(z,x_{0} )-d(z,y)+d(z,x_{0})\right\vert \leq d(x,y) $$ which applied to $y=x_{0}$ gives 
$$ \left\vert h_{z}(x)\right\vert \leq d(x,x_{0}). $$ We have that the map $\Phi$ is a continuous injection.

We denote the closure $\overline{\Phi(X\text{)}}$ by $\overline{X}^{H}$
or $X\cup\partial_{H}X$ and call it the horofunction compactification
of $X$. The elements in $\partial_{H}X$ are called horofunctions.
In a sense, horofunctions are to metric spaces what linear functionals
(of norm 1) are to vector spaces.

\section{Horofunctions and semicontractions}

It has long been observed, starting with the studies of Wolff and
Denjoy on holomorphic self-maps, that horofunctions is a relevant
notion for the dynamical behaviour of semicontractions.

\subsection{An argument of Beardon}

Here is a sketch of a nice argument due to Beardon \cite{Be97}. Suppose
that the semicontraction $f$ of a proper metric space $X,$ can be
approximated by uniform contractions $f_{k}$, so that for every $x$
one has $f_{k}(x)\rightarrow f(x).$ By the contraction mapping principle
every $f_{k}$ has a unique fixed point $y_{k}$ since $X$ is complete.
Now take a limit point $y$ of this sequence in a compactification
$\overline{X}$ of $X$. For simplicity of notation we assume that
$y_{k}\rightarrow y$. If the limit point belongs to $X$, then it
must be fixed by $f$, indeed: \[
f(y)=\lim_{k\rightarrow\infty}f_{k}(y_{k})=\lim_{k\rightarrow\infty}y_{k}=y.\]
 If the limit point instead belongs to the associated ideal boundary
$\partial X=\overline{X}\setminus X$, then we define the associated
{}``horoballs'': $z\in H_{\left\{ y_{k}\right\} }(x)$ if and only
if $z$ is the limit of points $z_{k}$ belonging to the closed balls
centered at $y_{k}$ with radius $d(x,y_{k}).$ These are invariant
in the sense that $f(H_{\left\{ y_{k}\right\} }(x))\subset H_{\left\{ y_{k}\right\} }(x)$
as is straightforward to verify (\cite{Be97}).

In the special case that the compactification is the horofunction
compactification and the ideal boundary $\partial_{H}X$, the horoballs
are not dependent on the sequence, just the limit $y$ and one has
for any $x\in X$\[
h_{y}(f(x))=\lim_{k\rightarrow\infty}d(y_{k},f(x))-d(y_{k},x_{0})=\lim_{k\rightarrow\infty}d(y_{k},f_{k}(x))-d(y_{k},x_{0})\]
\[
=\lim_{k\rightarrow\infty}d(f_{k}(y_{k}),f_{k}(x))-d(y_{k},x_{0})\leq\lim_{k\rightarrow\infty}d(y_{k},x)-d(y_{k},x_{0})=h_{y}(x).\]

This implies in particular :
\begin{thm}
(\cite{Be97}) Let $C$ be a bounded strictly convex domain in $\mathbb{R\mathrm{^{N}}}$
equipped with Hilbert's metric $d$. Let $f:C\rightarrow C$ be a
1-Lipschitz map. Then either there is a fixed point of $f$ in $C$
or the forward orbits $f^{n}(x)$ converge to a unique boundary point
$z\in\partial C$. 
\end{thm}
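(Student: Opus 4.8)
The plan is to run the approximation argument of Beardon recalled above inside the Hilbert geometry of $C$, using three standard features of that geometry: $(C,d)$ is complete and \emph{proper} (closed bounded sets are compact, the metric topology being the Euclidean one); the Euclidean dilations toward an interior point are uniform $d$-contractions; and, since $C$ is strictly convex, the horofunction boundary $\partial_{H}C$ is canonically the Euclidean boundary $\partial C$, each $\xi\in\partial C$ carrying a single horofunction $h_\xi$ whose horoballs $\{h_\xi\le c\}$ have closures in $\overline{C}$ meeting $\partial C$ only at $\xi$.

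First I dispose of the case of bounded orbits. If some --- hence every --- orbit $\{f^n(x_0)\}$ is bounded in $(C,d)$, it stays in a Euclidean-compact subset of $C$, so the orbit does not accumulate on $\partial C$, and by Nussbaum's theorem \cite{N88} the map $f$ has a fixed point in $C$: the first alternative. So assume every orbit is unbounded; in particular $f$ has no fixed point, since a fixed point would yield a bounded orbit. By the theorem of Calka \cite{Ca84} (independently Nussbaum \cite{N88} in this setting), valid because $(C,d)$ is proper, $d(f^n(x_0),x_0)\to\infty$, so $f^n(x_0)$ eventually leaves and stays out of every Euclidean-compact subset of $C$; thus all accumulation points of the orbit lie on $\partial C$.

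Next I produce an invariant horoball. Fix $p\in C$ and set $\phi_k(x)=p+(1-\tfrac{1}{k})(x-p)$; these map $C$ into $C$, are uniform $d$-contractions, and satisfy $\phi_k(x)\to x$ for every $x$. Then $f_k=\phi_k\circ f$ is a uniform contraction of the complete space $C$ and has a unique fixed point $y_k$. Using properness, pass to a subsequence along which $y_k\to y$ in $\overline{C}$ and $\Phi(y_k)$ converges in $C(C)$. If $y\in C$, then $f(y)=\lim f_k(y_k)=\lim y_k=y$, which is impossible; hence $y\in\partial C$ and $\lim\Phi(y_k)=h_y$. The computation reproduced above then gives $h_y(f(x))\le h_y(x)$ for all $x\in C$, so the entire forward orbit of $x_0$ lies in the horoball $H=\{z\in C:h_y(z)\le h_y(x_0)\}$.

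To finish, combine the two conclusions: the orbit $\{f^n(x_0)\}$ accumulates only on $\partial C$ and lies in $H$, hence its accumulation set is contained in $\overline{H}\cap\partial C=\{y\}$, and by compactness of $\overline{C}$ we get $f^n(x_0)\to y$. As noted in the introduction, the accumulation set of an orbit is independent of the starting point, so every orbit of $f$ converges to $y$, and uniqueness of the limit is automatic. The step I expect to be the real work --- and the one place strict convexity genuinely enters --- is the geometric lemma that $\overline{H}\cap\partial C=\{y\}$; equivalently, that $h_\xi(x_n)\to+\infty$ whenever $x_n\to\eta\in\partial C$ with $\eta\neq\xi$. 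This should be proved by expressing $h_\xi$ through the cross-ratio formula for $d$ and estimating the endpoints of the relevant chords, the transparent model being the disk formula $h_\zeta(z)=\log\frac{|\zeta-z|^2}{1-|z|^2}$, which tends to $+\infty$ as $z\to\eta\neq\zeta$ on the unit circle; the same estimates underlie the point-independence invoked above.
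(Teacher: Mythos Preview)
Your proposal is correct and takes essentially the same approach as the paper: Beardon's approximation of $f$ by strict contractions $f_k$, whose fixed points $y_k$ accumulate at a boundary point $y$ with $h_y\circ f\le h_y$, together with the observation that in the strictly convex case each horoball meets $\partial C$ at a single point. You flesh out the sketch more than the paper does---naming the dilations explicitly, invoking Nussbaum and Ca\l ka to organize the bounded/unbounded dichotomy, and correctly isolating the one place strict convexity is used (the lemma $\overline{H}\cap\partial C=\{y\}$)---but the route is the same.
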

In a more recent paper of Gaubert and Vigeral \cite{GV12}, an interesting
sharpening (in some sense) of Beardon's argument is used to establish
a result inspired by the Collatz-Wielandt characterisation of the
Perron root in linear algebra.
\begin{thm}
(\cite{GV12}) Let $f$ be a semicontraction of a complete metrically
star-shaped hemi-metric space $(X,d)$. Then,\[
D_{f}=\tau_{f}=\max_{h\in\overline{X}^{H}}\inf_{x\in X}h(x)-h(f(x)).\]

\end{thm}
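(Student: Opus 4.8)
The strategy is to prove the cyclic chain
\[
D_f\ \le\ \sup_{h\in\overline{X}^H}\ \inf_{x\in X}\bigl(h(x)-h(f(x))\bigr)\ \le\ \tau_f\ \le\ D_f ,
\]
which forces all three quantities to coincide; the rightmost inequality is already recorded in the excerpt ($0\le\tau_f\le D_f$), and the construction carried out for the leftmost one will, as a byproduct, exhibit a horofunction realising the supremum, so that it is in fact a maximum. The middle inequality is soft, while the leftmost one carries all the content.

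\emph{The middle inequality.} Fix $h\in\overline{X}^H$ and set $\lambda:=\inf_{x}\bigl(h(x)-h(f(x))\bigr)$. Telescoping along a forward orbit,
\[
h(x)-h(f^{n}(x))=\sum_{k=0}^{n-1}\bigl(h(f^{k}(x))-h(f^{k+1}(x))\bigr)\ \ge\ n\lambda .
\]
Every horofunction satisfies $h(x)-h(y)\le d(y,x)$, since this passes to the limit from $d(z,x)-d(z,y)\le d(y,x)$; applying it with $y=f^{n}(x)$ gives $n\lambda\le d(f^{n}(x),x)$. As $n\mapsto d(f^{n}(x),x)$ is subadditive — verified exactly as in the first lemma of the excerpt — division by $n$ and passage to the limit yield $\lambda\le\tau_f$. (In the genuinely asymmetric hemi-metric situation one reads $\tau_f$ here as the one-sided translation number attached to this particular sequence; identifying it with $D_f$ is part of what is being asserted.)

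\emph{The leftmost inequality.} Here I would run Beardon's approximation scheme from the excerpt, \emph{sharpened} so as to recover the whole of $D_f$ rather than only the value $0$. Metric star-shapedness gives, for each $w$, a geodesic from the centre $x_0$ to $w$; let $\delta_t(w)$, for $t\in(0,1)$, be the point on it at parameter $t$, so that $d(x_0,\delta_t(w))=t\,d(x_0,w)$, $d(\delta_t(w),w)=(1-t)\,d(x_0,w)$, and the radial retraction $\delta_t$ contracts distances by a factor strictly less than $1$. Then $f_t:=\delta_t\circ f$ is a strict contraction, hence by completeness has a unique fixed point $y_t=\delta_t(f(y_t))$, and $f_t\to f$ uniformly on bounded subsets as $t\to1$. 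Take $t_k\to1$ and a limit point $h$ of the sequence $\Phi(y_{t_k})$ in the (compact) space $\overline{X}^H$. If $h=\Phi(y_\infty)$ for some $y_\infty\in X$, then $y_\infty=\lim_k f_{t_k}(y_{t_k})=f(y_\infty)$, so $D_f=0$, and $h(x)-h(f(x))=d(y_\infty,x)-d(y_\infty,f(x))\ge0=D_f$ because $f$ is a semicontraction fixing $y_\infty$. If instead $h\in\partial_H X$, then for every $x$
\[
h(x)-h(f(x))=\lim_{k}\bigl(d(y_{t_k},x)-d(y_{t_k},f(x))\bigr),
\]
and one must bound this below by $D_f$. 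The lever is the fixed-point identity $y_{t_k}=\delta_{t_k}(f(y_{t_k}))$: it places $y_{t_k}$ on the geodesic from $x_0$ to $f(y_{t_k})$ with $d(y_{t_k},f(y_{t_k}))=(1-t_k)\,d(x_0,f(y_{t_k}))$, and the latter is $\ge D_f$ by the definition of minimal displacement. As $t_k\to1$ this forces the $y_{t_k}$ to run off to infinity along geodesics emanating from $x_0$ at ``speed at least $D_f$'', and it is the geodesic (star-shaped) structure that lets one upgrade $d(y_{t_k},f(y_{t_k}))\ge D_f$ into the uniform-in-$x$ conclusion $h(x)-h(f(x))\ge D_f$ for all $x$.

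\emph{Where the difficulty sits.} The soft ingredients really are soft; the crux is this last upgrade — producing from the approximating fixed points a single horofunction whose decrease under $f$ is \emph{uniformly} at least $D_f$, rather than merely non-negative as in Beardon's original argument. This is exactly the point at which metric star-shapedness cannot be dispensed with: for a general metric space $\tau_f<D_f$ can occur (take two disjoint parallel rays joined only at their origins, and the isometry swapping them). I expect it to demand the most care, alongside the two bookkeeping points already flagged: the compactness of $\overline{X}^H$, needed so that the supremum is attained, and the careful handling of the two one-sided distances forced upon one by working with a hemi-metric rather than a metric.
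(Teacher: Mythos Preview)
The paper does not itself prove this theorem: it is quoted from \cite{GV12}, and the only indication given about the argument is that it is ``an interesting sharpening (in some sense) of Beardon's argument'', with the remark that metrically star-shaped means roughly ``a canonical choice of geodesics which have Busemann non-positive curvature property''. Your proposal is precisely a sharpening of Beardon's approximation scheme, so at the level of description available in the paper the approaches coincide.

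Your cyclic chain and the middle and right inequalities are correct as written. The one place you leave unfinished is exactly the place you flag: passing from $d(y_t,f(y_t))\ge D_f$ to the uniform lower bound $h(x)-h(f(x))\ge D_f$. This \emph{can} be completed with the convexity of distance that the star-shaped hypothesis supplies. Concretely, if $y_t=\delta_t(f(y_t))$ lies on the geodesic from $x_0$ to $f(y_t)$, then Busemann convexity gives
\[
d(y_t,f(x))\ \le\ (1-t)\,d(x_0,f(x))\ +\ t\,d(f(y_t),f(x))\ \le\ (1-t)\,d(x_0,f(x))\ +\ t\,d(y_t,x),
\]
so that
\[
d(y_t,x)-d(y_t,f(x))\ \ge\ (1-t)\bigl(d(y_t,x)-d(x_0,f(x))\bigr).
\]
Since $d(x_0,y_t)=t\,d(x_0,f(y_t))\ge tD_f/(1-t)$, one has $(1-t)\,d(y_t,x)\ge tD_f - (1-t)\,d(x,x_0)$ (using the triangle inequality through $x_0$), and letting $t\to1$ yields $h(x)-h(f(x))\ge D_f$ for every $x$. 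This is presumably the computation behind the sentence you left as a promise; with it your sketch is complete and in line with what the paper reports about \cite{GV12}. The hemi-metric asymmetry requires keeping track of which argument sits on which side throughout, but does not change the structure of the estimate.
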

This is just the first part of the theorem, it has an interesting
second part as well. For precise definitions we refer to the paper
in question, but roughly the metric space is allowed to be asymmetric
and should have a canonical choice of geodesics which have Busemann
non-positive curvature property. If $(X,d)$ is not proper one has
to intepret the horofunctions appropriately. To a certain extent this
theorem unifies Beardon's result with the result in the next paragraph
in the setting it considers. Both Beardon and Gaubert-Vigeral results
apply to semicontractions of Hilbert's metric.

\subsection{A general abstract result}

Beardon's argument depended on finding approximations to $f$. Here
is a quite general version of the relationship between semicontractions
and horofunctions, not using any approximating sequence:
\begin{prop}
\label{pro:K}(\cite{K01}) Let $(X,d)$ be a proper metric space.
If $f$ is a semicontraction with translation number $\tau$, then
for any $x\in X$ there is a function $h\in\overline{X}^{H}$ such
that \[
h(f^{k}(x))\leq-\tau k\]
 for all $k>0.$\end{prop}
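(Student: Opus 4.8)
The plan is to build the horofunction $h$ directly as a subsequential limit of the functions $h_{f^{n}(x)}=\Phi(f^{n}(x))$, with no approximating sequence of contractions; the whole point is to choose the indices $n$ with care. Fix the point $x$ and take it as the base point, $x_{0}=x$, and write $a_{n}=d(x,f^{n}(x))$. The semicontraction property together with the triangle inequality gives $a_{n+m}\le d(x,f^{n}(x))+d(f^{n}(x),f^{n+m}(x))\le a_{n}+a_{m}$, so $(a_{n})$ is subadditive and, by the lemma above, $a_{n}/n\to\tau$ while $a_{n}\ge\tau n$ for every $n$. The single elementary estimate I will use repeatedly is that, for $n\ge k\ge 0$, applying the semicontraction inequality $k$ times to the pair $(f^{n-k}(x),x)$ yields $d(f^{n}(x),f^{k}(x))\le d(f^{n-k}(x),x)=a_{n-k}$; hence, since $x_{0}=x$,
\[
h_{f^{n}(x)}(f^{k}(x))=d(f^{n}(x),f^{k}(x))-d(f^{n}(x),x)\le a_{n-k}-a_{n}.
\]
So it suffices to produce, for each prescribed $\varepsilon>0$, arbitrarily large $n$ satisfying $a_{n}-a_{n-k}\ge(\tau-\varepsilon)k$ for all $0\le k\le n$.

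The heart of the proof is to obtain such indices by a ``record'' selection. Fix $\varepsilon>0$ and set $b_{n}=a_{n}-(\tau-\varepsilon)n$. From $a_{n}\ge\tau n$ we get $b_{n}\ge\varepsilon n\to\infty$, so for infinitely many $n$ one has $b_{n}=\max_{0\le m\le n}b_{m}$; call such an $n$ \emph{$\varepsilon$-good}. For an $\varepsilon$-good $n$ and any $0\le k\le n$, the inequality $b_{n}\ge b_{n-k}$ rearranges to precisely $a_{n}-a_{n-k}\ge(\tau-\varepsilon)k$, so combining with the display above,
\[
h_{f^{n}(x)}(f^{k}(x))\le-(\tau-\varepsilon)k\qquad\text{for every }\varepsilon\text{-good }n\text{ and every }0\le k\le n.
\]

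To conclude I will use compactness. Since $X$ is proper, $\overline{X}^{H}$ is sequentially compact, so along the $\varepsilon$-good indices (which tend to infinity, being infinite in number) the functions $h_{f^{n}(x)}$ have a convergent subsequence; as convergence in $\overline{X}^{H}$ implies pointwise convergence, the limit $h_{\varepsilon}\in\overline{X}^{H}$ satisfies $h_{\varepsilon}(f^{k}(x))\le-(\tau-\varepsilon)k$ for all $k>0$. Applying this with $\varepsilon=1/j$ and extracting one further convergent subsequence of $(h_{1/j})_{j}$ gives a limit $h\in\overline{X}^{H}$ with $h(f^{k}(x))\le-\tau k$ for every $k>0$, as desired. (Equivalently, one runs a single diagonal argument: for each $j$ choose an $\varepsilon$-good index $n_{j}>n_{j-1}$ with $\varepsilon=1/j$ and pass to a subsequential limit of $(h_{f^{n_j}(x)})_{j}$, using that for fixed $k$ one eventually has $n_{j}\ge k$ and $-(\tau-1/j)k\to-\tau k$.)

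The genuinely non-routine step is the record selection: one must not take an arbitrary subsequence of the orbit, but rather the indices at which the orbit is pushed out as far as possible relative to the linear rate $\tau-\varepsilon$. Subadditivity controls $a_{n}-a_{n-k}$ only from above, so without this choice the desired lower bound can fail badly, and it is exactly this that lets the argument avoid any approximating family. Properness enters purely as a black box, to guarantee that the relevant sequences of $1$-Lipschitz functions subconverge in $\overline{X}^{H}$.
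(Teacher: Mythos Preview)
Your proof is correct and follows essentially the same approach as the paper: the paper likewise sets $b_i(n)=d(x,f^n(x))-(\tau-\epsilon_i)n$, selects ``record'' indices $n_i$ with $b_i(n_i)>b_i(m)$ for all $m<n_i$, uses the same semicontraction estimate $d(f^k(x),f^{n_i}(x))\le d(x,f^{n_i-k}(x))$, and then passes to a subsequential limit in $\overline{X}^{H}$. The only cosmetic difference is that the paper carries out the diagonal argument in one pass (choosing $n_i$ for $\epsilon_i\searrow 0$ and extracting a single convergent subsequence), which is exactly the alternative you describe parenthetically at the end.
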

\begin{proof}
Given a sequence $\epsilon_{i}\searrow0$ we set $b_{i}(n)=d(f^{n}(x),x)-(\tau-\epsilon_{i})n.$
Since these numbers are unbounded, we can find a subsequence such
that $b_{i}(n_{i})>b_{i}(m)$ for any $m<n_{i}$ and by sequential
compactness we may moreover assume that $f^{n_{i}}(x)\rightarrow h\in\overline{X}^{H}$,
for some $h$. 

We then have for any $k\geq1$ that \begin{align*}
h(f^{k}(x)) & =\lim_{i\rightarrow\infty}d(f^{k}(x),f^{n_{i}}(x))-d(x,f^{n_{i}}(x))\\
 & \leq\liminf_{i\rightarrow\infty}d(x,f^{n_{i}-k}(x))-d(x,f^{n_{i}}(x))\\
 & \leq\liminf_{i\rightarrow\infty}b_{i}(n_{i}-k)+(\tau-\epsilon_{i})(n_{i}-k)-b_{i}(n_{i})-(\tau-\epsilon_{i})n_{i}\\
 & \leq\lim_{i\rightarrow\infty}-\tau k+\epsilon_{i}k=-\tau k,\\
\end{align*}
as was to be shown.
\end{proof}

\section{Horofunctions and asymptotic geometry of Hilbert geometries}

Being a convex domain in euclidean space $C$ has a natural extrinsic
boundary $\partial C$. One faces the challenge to compare it with
the intrinsic metric boundary $\partial_{H}X$. Two papers that consider
this comparison are \cite{KMN06} and \cite{W08}. In the former only
the case of simplices are treated and very precise description is
obtained. In the latter reference the general situation is considered
which is useful here. It is observed that the horoballs are convex
sets, and this means in particular that any intersection of horoballs
where the base points tends to the boundary of $X$ can at most contain
one closed face. Walsh also shows that any sequence convergent in
the horofunction boundary also converges to a point in $\partial C$.
We will make free use of these facts below. 

Notice that since any two orbits lie on a bounded distance from each
other, it follows from simple well-known estimates on the Hilbert
metric that the closed face, as predicted by the conjecture, must
be independent of the point $x$ being iterated. This follows from
the more general statement:
\begin{prop}
(\cite{KN01}) Let $C$ be a bounded convex domain with Hilbert's
metric $d$. Fix $y\in C$ and define the Gromov product\index{Gromov product}:\[
(x,x')=\frac{1}{2}(d(x,y)+d(x',y)-d(x,x')).\]
Assume that we have two sequences of points in $C$ that converges
to boundary points: $x_{n}\rightarrow\overline{x}\in\partial C$ and
$z_{n}\rightarrow\overline{z}\in\partial C$. If $\partial C$ does
not contain the line segment $\left[\overline{x},\overline{z}\right],$
then there is a constant $K=K(\overline{x},\overline{z})$ such that
\[
\limsup_{n\rightarrow\infty}(x_{n},z_{n})\leq K.\]

\end{prop}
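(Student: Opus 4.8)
The plan is to dominate the Gromov product $(x_n,z_n)$ by the Hilbert distance from $y$ to a conveniently chosen point of the Euclidean chord $[x_n,z_n]$, and to use the hypothesis only to guarantee that such a point can be taken in the interior of $C$.

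First I would record an elementary inequality valid in any metric space: if $p$ satisfies $d(x,p)+d(p,z)=d(x,z)$, then
\[
(x,z)=\tfrac12\bigl(d(x,y)+d(z,y)-d(x,z)\bigr)\le d(y,p),
\]
because $d(x,y)+d(z,y)\le d(x,p)+d(p,z)+2d(y,p)=d(x,z)+2d(y,p)$ by the triangle inequality and the assumption on $p$. Since $x_n,z_n\in C$ and $C$ is convex, the Euclidean chord $[x_n,z_n]$ lies in $C$, and $d$ is additive along it by multiplicativity of the cross-ratio; hence the inequality applies to every $p\in[x_n,z_n]$, and it suffices to find points $p_n\in[x_n,z_n]$ with $d(y,p_n)$ bounded.

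Next I would take $p_n:=m_n$, the Euclidean midpoint $\tfrac12(x_n+z_n)$, so that $m_n\to m:=\tfrac12(\overline{x}+\overline{z})$; the only use of the hypothesis is to see that $m\in C$. First, $\overline{x}\ne\overline{z}$, since otherwise $[\overline{x},\overline{z}]$ would be a single point of $\partial C$, contrary to assumption. Since $\overline{x},\overline{z}\in\partial C$ while $[\overline{x},\overline{z}]\not\subset\partial C$, some point $q$ of the open segment $(\overline{x},\overline{z})$ lies in $C$. If another point $q'$ of that open segment lay in $\partial C$, then $q'$ would be strictly between $q\in C$ and one of the endpoints $\overline{x},\overline{z}\in\overline{C}$, hence interior to $C$ --- a contradiction. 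Therefore $(\overline{x},\overline{z})\subset C$, and in particular $m\in C$.

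Finally I would conclude by continuity: $d(y,\cdot)$ is continuous on $C$, and $m_n\in C$ for all large $n$ (as $m\in C$ and $C$ is open) with $m_n\to m$, so $d(y,m_n)\to d(y,m)<\infty$. Combined with $(x_n,z_n)\le d(y,m_n)$ this yields
\[
\limsup_{n\to\infty}(x_n,z_n)\le\lim_{n\to\infty}d(y,m_n)=d(y,m)=:K(\overline{x},\overline{z}).
\]
There is no serious obstacle here; the content lies entirely in the first observation --- that $(x_n,z_n)$ is bounded above by the $d$-distance from $y$ to any point of the chord $[x_n,z_n]$ --- after which the hypothesis is exactly what forces the midpoint of that chord to converge into the interior of $C$, rather than onto $\partial C$, where $d(y,\cdot)$ would blow up.
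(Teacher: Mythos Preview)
The paper does not actually supply a proof of this proposition; it is stated with a citation to \cite{KN01} and then only its consequences are discussed. Your argument is correct and is essentially the one given in \cite{KN01}: bound $(x,z)$ by $d(y,p)$ for any $p$ metrically between $x$ and $z$, use that Euclidean segments are Hilbert geodesics (additivity of the cross-ratio), and observe that the hypothesis forces the open segment $(\overline{x},\overline{z})$ to lie in $C$, so the midpoints $m_n$ converge to an interior point and $d(y,m_n)$ stays bounded. One cosmetic remark: you need not argue that $m_n\in C$ ``for all large $n$'' via openness, since $m_n$ is a convex combination of $x_n,z_n\in C$ and hence lies in $C$ for every $n$.
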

As remarked in the same paper this is useful for the studies of iteration
of semicontractions $f$, see Theorem \ref{thm:KN} below. It has
also found other applications, such as in \cite{FK05}. To explain
the point above about different orbits in detail, notice first that\[
d(f^{n}(x),f^{n}(z))\leq d(x,z)\]
for all $n>0.$ Hence for any unbounded $f$ one has\[
(f^{n}(x),f^{n}(z))=\frac{1}{2}(d(f^{n}(x),y)+d(f^{n}(z),y)-d(f^{n}(x),f^{n}(z)))\rightarrow\infty.\]
In view of the proposition we have that any subsequence of the iterates
on $x$ and the same for $z$ must accumulate on the same closed faces.

\section{Some consequences for Hilbert metric semicontractions}

\subsection{Karlsson-Noskov}

In \cite{KN01} a certain simple asymptotic geometric fact in terms
of the Gromov product is established as recalled above. As a corollary
of this, in view of the arguments in \cite{K01}, we concluded that:
\begin{thm}
\label{thm:KN}(\cite{KN01}) Let $C$ be a bounded convex domain
in $\mathbb{R\mathrm{^{N}}}$ equipped with Hilbert's metric $d$.
Let $f:C\rightarrow C$ be a 1-Lipschitz map. Then either there is
a fixed point of $f$ in $C$ or there is a boundary point $z\in\partial C$
such that the line segment between $z$ and any limit point of a forward
iteration $f^{n}(x)$ is contained in $\partial C$. 
\end{thm}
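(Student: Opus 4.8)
The plan is to combine Proposition~\ref{pro:K} (the existence of a horofunction decreasing at rate $\tau$ along the orbit) with the convexity of horoballs in Hilbert geometry and the Karlsson--Noskov estimate on the Gromov product. Assume there is no fixed point in $C$; by Nussbaum's result the orbit $f^n(x)$ is then unbounded, and by Calka's/Nussbaum's theorem $d(f^n(x),x)\to\infty$, so every limit point of the orbit lies in $\partial_H X$ and hence (by Walsh) corresponds to a point in $\partial C$. First I would apply Proposition~\ref{pro:K}: for the given $x$ there is a horofunction $h\in\overline{X}^H$ with $h(f^k(x))\le -\tau k$ for all $k>0$. Let $\overline{z}\in\partial C$ be the point of the extrinsic boundary to which $h$ (obtained as a limit of $f^{n_i}(x)$) corresponds; this $\overline{z}$ will be our distinguished boundary point.

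The core of the argument is to show that for any limit point $\overline{w}\in\partial C$ of the orbit $f^{n}(x)$, the segment $[\overline{z},\overline{w}]$ lies in $\partial C$. There are two cases. If $\tau>0$, then since $h(f^k(x))\le -\tau k\to-\infty$, the iterates $f^k(x)$ eventually enter every horoball $\{h\le C\}$ and in fact $h(f^k(x))\to-\infty$; thus every limit point $\overline{w}$ of the orbit lies in the intersection of all horoballs $\{h\le C\}$ as $C\to-\infty$, equivalently in the ``horoball at infinity'' of $h$. Since Walsh's results tell us that horoballs in Hilbert geometry are convex and that the intersection of a decreasing family of horoballs based at a boundary-escaping sequence can contain at most one closed face, both $\overline{z}$ (which is in the closure of this intersection, being the base direction of $h$) and $\overline{w}$ lie in the closure of a single closed face $F\subset\partial C$; then convexity of $F$ gives $[\overline{z},\overline{w}]\subset F\subset\partial C$. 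I would spell out carefully why $\overline{z}$ itself lies in the closure of the relevant horoball intersection — this uses that $h=\lim h_{y_i}$ with $y_i=f^{n_i}(x)\to\overline{z}$ and the monotonicity built into Proposition~\ref{pro:K}'s construction.

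The remaining, and delicate, case is $\tau=0$, where the horoball argument above degenerates. Here I would instead invoke the Gromov product estimate of Proposition~6 (Karlsson--Noskov) directly, together with the fact already noted in the excerpt that $(f^n(x),f^n(z))\to\infty$ for distinct starting points and, more to the point, the computation inside the proof of Proposition~\ref{pro:K}: the choice of $n_i$ as ``record times'' for $b_i(n)=d(f^n(x),x)-(\tau-\epsilon_i)n$ forces $h(f^{n_i-k}(x))$ to be controlled, which pins the iterates close to the geodesic-like direction of $h$. Concretely, suppose for contradiction that $[\overline{z},\overline{w}]\not\subset\partial C$ for some limit point $\overline{w}=\lim f^{m_j}(x)$. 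Then by Proposition~6 there is $K$ with $\limsup_j (f^{n_i}(x),f^{m_j}(x))\le K$ along suitable subsequences; but $h(f^{m_j}(x))\le 0$ combined with $f^{n_i}(x)\to\overline{z}$ and the definition of the Gromov product relative to the basepoint $x_0$ forces that Gromov product to grow, a contradiction. This step — transferring the ``$h\le-\tau k$'' information into a lower bound on the Gromov product $(f^{n_i}(x),f^{m_j}(x))$ and hence into a statement forbidden by Proposition~6 unless the segment lies in $\partial C$ — is the heart of the matter and essentially the content of ``the arguments in \cite{K01}'' alluded to in the text.

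The main obstacle I anticipate is the bookkeeping in the $\tau=0$ case: one must juggle two subsequences (the record times $n_i$ producing $h$, and the times $m_j$ realizing a given limit point $\overline{w}$), keep track of the errors $\epsilon_i$, and convert the horofunction inequality into a genuine Gromov-product bound with the right basepoint, all while ensuring the limit point $\overline{w}$ one is testing is arbitrary. A secondary technical point is justifying the passage from ``lies in the closure of a common horoball intersection'' to ``lies in the closure of a single closed face,'' which rests on Walsh's convexity analysis of Hilbert horoballs; I would state precisely which property of \cite{W08} is being used (each horoball's recession/closure meets $\partial C$ in at most one closed face) rather than re-prove it. Finally, one should remark that the independence of the face from the starting point $x$ is automatic from Proposition~6 as already explained in the excerpt, so it suffices to argue for one fixed $x$.
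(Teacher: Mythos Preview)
Your proposal is essentially correct and contains the paper's argument, but you have over-engineered it. The paper's proof of Theorem~\ref{thm:KN} is simply: Proposition~\ref{pro:K} gives a horofunction $h$, arising as a limit along $f^{n_i}(x)\to\overline{z}\in\partial C$, with $h(f^k(x))\le -\tau k\le 0$ for \emph{all} $k$; hence for any orbit limit point $\overline{w}=\lim f^{m_j}(x)$ one computes (with basepoint $x_0$)
\[
(f^{n_i}(x),f^{m_j}(x))
=\tfrac12\bigl(d(f^{m_j}(x),x_0)-\bigl[d(f^{n_i}(x),f^{m_j}(x))-d(f^{n_i}(x),x_0)\bigr]\bigr)
\xrightarrow[i\to\infty]{}\tfrac12\bigl(d(f^{m_j}(x),x_0)-h(f^{m_j}(x))\bigr)\ge\tfrac12 d(f^{m_j}(x),x_0)\to\infty,
\]
which by the Gromov-product proposition forces $[\overline{z},\overline{w}]\subset\partial C$. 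This is exactly the computation you sketch in your ``delicate'' $\tau=0$ case, and it works verbatim for all $\tau\ge0$; the only fact needed is $h(f^k(x))\le 0$, not the stronger $\le -\tau k$.

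Your separate $\tau>0$ branch---going via convexity of horoballs, Walsh's description of their boundary intersections, and the containment of $\overline{z}$ in the closure of the nested horoball intersection---is not wrong, but it is the argument the paper reserves for the \emph{stronger} conclusion in Theorem~8 (the linear-drift case, where one actually pins down a single closed face), not for Theorem~\ref{thm:KN}. Invoking \cite{W08} here is unnecessary, and the step you flag as needing care (why $\overline{z}$ lies in the closure of $\bigcap_C\{h\le C\}$) is a genuine extra burden you can simply avoid. Drop the case split, run the Gromov-product computation once, and you have exactly the paper's proof.
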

In \cite{K05} the same result but in a more general setting is discussed.

\subsection{Monotone orbits}

If the sequence $d(x,f^{n}(x))$ grows monotonically, then every convergent
subsequence can play the role of $n_{i}$ as in the proof of Proposition
\ref{pro:K}. This means that in Theorem \ref{thm:KN} the special
property of $z$ is held by any limit point, thus verifying the conjecture.
Explicitly:
\begin{thm}
Let $C$ be a bounded convex domain in $\mathbb{R\mathrm{^{N}}}$
equipped with Hilbert's metric $d$. Let $f:C\rightarrow C$ be a
1-Lipschitz map. Suppose that for some $x\in X$, $d(x,f{}^{n}(x))\nearrow\infty$
monotonically. Then there is a closed face that contains every accummulation
of the iterations of $f$.
\end{thm}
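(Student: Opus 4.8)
The plan is to push the observation recorded just before the statement --- that when $n\mapsto d(x,f^{n}(x))$ is monotone, \emph{every} convergent subsequence of the orbit can serve as the subsequence $n_{i}$ in the proof of Proposition~\ref{pro:K} --- to its logical end: one then obtains, \emph{simultaneously} at each accumulation point of the orbit, a horofunction whose horoball contains the whole orbit, and Walsh's results on horoballs in Hilbert geometry force all these accumulation points into a single closed face.

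First I would clear the preliminaries. Since $d(x,f^{n}(x))\nearrow\infty$, the orbit of $x$ is unbounded, so $f$ is fixed-point free, and by Calka's theorem --- in this setting also due to Nussbaum \cite{N88} --- the orbit leaves every bounded subset of $C$; as $\overline{C}\subset\mathbb{R}^{N}$ is compact, the set $L$ of accumulation points of $(f^{n}(x))_{n}$ is a nonempty subset of $\partial C$. Take the base point of the horofunction compactification of $C$ to be $x_{0}=x$.

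Now the core. Fix $w\in L$; choose $n_{i}\to\infty$ with $f^{n_{i}}(x)\to w$ in $\overline{C}$, and --- refining, using that $C$ is proper so $\overline{C}^{H}$ is compact --- assume also $f^{n_{i}}(x)\to h_{w}$ for some $h_{w}\in\partial_{H}C$ (by Walsh's theorem that convergence in $\overline{C}^{H}$ forces convergence in $\overline{C}$, this horofunction determines $w$, and as $w$ ranges over $L$ one obtains all the relevant horofunctions). For any $k\geq0$ and $i$ large ($n_{i}>k$), because $f$ is a semicontraction and $m\mapsto d(f^{m}(x),x)$ is nondecreasing, $d(f^{n_{i}}(x),f^{k}(x))=d(f^{k}(f^{n_{i}-k}(x)),f^{k}(x))\leq d(f^{n_{i}-k}(x),x)\leq d(f^{n_{i}}(x),x)$, and letting $i\to\infty$ gives $h_{w}(f^{k}(x))\leq0$ for all $k\geq0$. (This is precisely the $\tau$-free form of the remark that any convergent subsequence may play the role of $n_{i}$ in Proposition~\ref{pro:K}.) Hence the entire orbit lies in the horoball $B_{w}=\{\,h_{w}\leq0\,\}$, and in particular $L\subseteq\overline{B_{w}}\cap\partial C$. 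Consider $\mathcal{B}=\bigcap_{w\in L}B_{w}$, an intersection of horoballs whose base points tend to $\partial C$; it is convex and contains the whole orbit, so $L\subseteq\overline{\mathcal{B}}\cap\partial C$. By Walsh's results --- this is the precise sense in which such an intersection ``can at most contain one closed face'' --- there is a single closed proper face $F$ of $\overline{C}$ with $\overline{\mathcal{B}}\cap\partial C\subseteq F$; therefore $L\subseteq F$. Finally, since $d(f^{n}(x),f^{n}(y))\leq d(x,y)$ for all $n$, the accumulation faces are the same for every orbit \cite{KN01}, so this $F$ contains the accumulation set of every orbit of $f$. (Equivalently the argument can be run through Theorem~\ref{thm:KN}: in the monotone case its distinguished point $z$ may be taken to be any accumulation point, giving $[z,w]\subset\partial C$ for all accumulation points $z,w$, after which the same horoball-convexity collapses this to a single face.)

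The only step bearing real weight is imported rather than new: Walsh's theorem that horoballs in a Hilbert geometry are convex and that (an intersection of) such horoballs --- with base points escaping to $\partial C$ --- meets $\partial C$ within a single closed face. Granting that, everything else is the monotonicity observation --- which merely substitutes an arbitrary subsequence for the ``record'' one of Proposition~\ref{pro:K} --- plus routine use of the triangle inequality. I expect the one delicate point in a write-up to be the bookkeeping around Walsh's boundary-convergence result, namely the verification that $L$ really is the set of boundary points attached to the horofunction-limits of the orbit, since that is what legitimizes passing from ``orbit $\subseteq\mathcal{B}$'' to ``$L\subseteq\overline{\mathcal{B}}\cap\partial C$'' and on into a single closed face.
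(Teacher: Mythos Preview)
Your proposal is correct and follows essentially the same route as the paper. The paper's own argument is the one-sentence remark preceding the statement: monotonicity of $n\mapsto d(x,f^{n}(x))$ lets \emph{any} convergent subsequence play the role of the $n_{i}$ in Proposition~\ref{pro:K}, so the distinguished point $z$ of Theorem~\ref{thm:KN} may be taken to be an arbitrary accumulation point, and then Walsh's convexity of horoballs collapses the limit set to a single closed face. You unpack exactly this, giving the explicit inequality $d(f^{n_{i}}(x),f^{k}(x))\leq d(f^{n_{i}-k}(x),x)\leq d(f^{n_{i}}(x),x)$ and stating precisely where Walsh's results enter; you also note the equivalent phrasing via Theorem~\ref{thm:KN} at the end, which is literally how the paper words it.
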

The monotonicity can be weakend for this argument to go through with
some smaller addition, although this may not suffice for the general
case.

\subsection{Linear drift}

If $\tau_{f}$ is strictly positive, then by the general abstract
result, Proposition \ref{pro:K}, the orbit goes deeper and deeper
inside the horoballs of a fixed horofunction. Since these are convex
sets, their intersction is convex and must therefore be a closed face.
The orbit can only accumulate here since the horoballs contain all
orbit points:
\begin{thm}
Let $C$ be a bounded convex domain in $\mathbb{R\mathrm{^{N}}}$
equipped with Hilbert's metric $d$. Let $f:C\rightarrow C$ be a
1-Lipschitz map. Suppose that $\tau_{f}>0$, then there is a closed
face that contains every accummulation point of the iterations of
$f$.
\end{thm}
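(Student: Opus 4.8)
The plan is to combine Proposition~\ref{pro:K} with the geometric facts about Hilbert geometry recorded in the section on horofunctions and asymptotic geometry, namely that horoballs are convex and that horofunctions of the Hilbert metric correspond to (and converge to) points of $\partial C$, with the additional observation from \cite{W08} that a nested intersection of horoballs whose basepoints tend to $\partial C$ can contain at most one closed face. Since $\tau_f>0$, Proposition~\ref{pro:K} produces, for a fixed $x\in C$, a horofunction $h\in\overline{X}^{H}$ with $h(f^k(x))\le-\tau_f k$ for all $k>0$. I would first fix such an $h$ and such an $x$.

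The second step is to locate the orbit inside a decreasing family of horoballs. For each $k$ set $C_k=\sup_{j\ge 0}h(f^{k+j}(x))$; by the estimate $h(f^{k+j}(x))\le-\tau_f(k+j)\le -\tau_f k$ we get $C_k\le -\tau_f k\to-\infty$, and by construction the tail $\{f^j(x):j\ge k\}$ lies in the horoball $\{z:h(z)\le C_k\}$. More robustly, one can simply use the nested horoballs $H_k=\{z\in C: h(z)\le -\tau_f k\}$: since $h$ is $1$-Lipschitz and continuous, each $H_k$ is closed in $C$, and $H_{k+1}\subset H_k$; and for every $j\ge k$ we have $f^j(x)\in H_k$ because $h(f^j(x))\le-\tau_f j\le -\tau_f k$. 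Hence any accumulation point of the orbit in $\overline{C}$ lies in the closure $\overline{H_k}$ for every $k$. Here the only mild care needed is that accumulation is taken in the extrinsic closure $\overline C\subset\mathbb{R}^N$; one uses the sublevel sets of $h$ extended by lower semicontinuity, or equivalently the description of horoballs in \cite{W08}.

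The third step is to identify $\bigcap_k \overline{H_k}$. Each $H_k$ is convex (horoballs in Hilbert geometry are convex by Walsh's observation), so each $\overline{H_k}$ is convex, hence the intersection $F:=\bigcap_{k\ge 0}\overline{H_k}$ is a convex subset of $\overline C$; since the basepoints defining $h$ tend to $\partial C$ and the levels $-\tau_f k$ tend to $-\infty$, this nested intersection lies in $\partial C$ and, being convex and contained in $\partial C$, is contained in a single closed face --- this is exactly the ``intersection of horoballs with basepoints tending to the boundary contains at most one closed face'' statement cited from \cite{W08}. Taking that closed face proves the theorem for this particular $x$. Finally, to get a face independent of $x$ (so that it contains \emph{every} accumulation point of \emph{every} orbit), I would invoke Proposition~(\cite{KN01}) on the Gromov product together with the displacement estimate $d(f^n(x),f^n(z))\le d(x,z)$: as spelled out in the discussion preceding Theorem~\ref{thm:KN}, this forces the limit sets of the orbits of $x$ and of any other $z\in C$ to accumulate on the same closed faces, so the face found above works uniformly.

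The main obstacle I anticipate is the third step: making precise the passage from the convex sets $H_k$ inside $C$ to their behaviour at the extrinsic boundary $\partial C$, i.e. verifying that the nested intersection of the extrinsic closures of these horoballs genuinely collapses into (the closure of) a single face rather than merely into some convex boundary subset spanning several faces. This is where one must lean on the detailed comparison between $\partial_H X$ and $\partial C$ from \cite{W08} (convexity of horoballs, the fact that convergence in $\partial_H X$ implies convergence in $\partial C$, and that such an intersection meets at most one open face); everything else --- subadditivity giving $\tau_f$, the Lipschitz/continuity bookkeeping for horofunctions, and the Gromov-product argument for independence of the basepoint --- is routine given the results already quoted in the excerpt.
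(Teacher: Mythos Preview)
Your proposal is correct and follows essentially the same route as the paper: invoke Proposition~\ref{pro:K} to trap the orbit in a nested family of horoballs of a single horofunction, use convexity of horoballs so that the intersection is a convex subset of $\partial C$, and conclude it lies in one closed face. The paper's argument is the short paragraph preceding the theorem and is less explicit than yours about the passage to the extrinsic closure and about independence of the basepoint; your identification of the third step as the delicate point is accurate, and the paper handles it with the same appeal to \cite{W08}.
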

Here it is interesting to remark, as is done in \cite{LeN13}, that
the conjecture also holds true in the quite opposite situation that
$\liminf_{k\rightarrow\infty}d(f^{k}(x),f^{k+1}(x))=0$ as shown by
Nussbaum in \cite{N07}.

\subsection{Two-dimensional geometries}

In this note we have described two methods: Beardon's method with
approximations and another one coming from \cite{K01} in terms of
orbits. Unfortunately there is at present no known relation between
the associated boundary points these two arguments give. If they would
give the same point, then the conjecture would follow. In dimension
two where the extrinsic geometry is limited, one can play out these
two boundary points to conclude:
\begin{thm}
The Conjecture is true in dimension 2.\end{thm}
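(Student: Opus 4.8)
The plan is to argue by contradiction: suppose $f$ has no fixed point in $C$. By Nussbaum's result the orbit then accumulates on $\partial C$, hence is unbounded, and by Calka's theorem $d(x,f^{n}x)\to\infty$, so the orbit escapes every bounded set and its set $\omega$ of accumulation points is contained in $\partial C$. Two regimes are already covered: if $\tau_{f}>0$ the Linear-drift theorem above gives the conclusion, and if $\liminf_{k}d(f^{k}x,f^{k+1}x)=0$ the quoted result of Nussbaum does; since $k\mapsto d(f^{k}x,f^{k+1}x)$ is nonincreasing, I may thus assume $\tau_{f}=0$ and $\delta_{f}(x)>0$. Theorem~\ref{thm:KN} now supplies a point $z\in\partial C$ with $[z,w]\subseteq\partial C$ for every $w\in\omega$, and in fact $z\in\omega$, since by Walsh's theorem $z$ is the extrinsic limit of the subsequence of the orbit producing the horofunction of Proposition~\ref{pro:K}.

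I split on whether $z$ is an extreme point of $C$. If it is not, then $z$ lies in the relative interior of a unique maximal boundary segment $F$, so $\partial C$ coincides with the line of $F$ near $z$; any boundary segment issuing from $z$ thus runs along that line, and $[z,w]\subseteq\partial C$ forces $w\in F$. Hence $\omega\subseteq F$, a closed face, and we are done. So assume $z$ is extreme. Since $N=2$, $\partial C$ is a convex Jordan curve, endpoints of maximal segments are themselves extreme points, and at most two maximal segments $F^{+},F^{-}$ have $z$ as an endpoint (either may be absent). For $w\in\omega$ with $w\neq z$ the segment $[z,w]$ is a nondegenerate boundary segment with endpoint $z$, hence lies in $F^{+}\cup F^{-}$; therefore $\omega\subseteq\{z\}\cup F^{+}\cup F^{-}$. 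The conjecture can fail for this $f$ only if $\omega$ meets the relative interiors of both $F^{+}$ and $F^{-}$, and the task is to exclude this ``straddling'' of the vertex $z$.

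To do so I must combine the two available constructions. Besides $z$, Proposition~\ref{pro:K} (with $\tau_{f}=0$) gives a horofunction $h$ with $h(f^{k}x)\le 0$ for all $k>0$; and Beardon's method gives a second one: approximate $f$ by uniform contractions $f_{k}$ converging pointwise to $f$ (for instance, compose $f$ with a Euclidean dilation of $C$ toward $x_{0}$ by a factor $t_{k}\uparrow1$), let $y_{k}$ be the fixed point of $f_{k}$, and pass to a limit $\bar y$ of the $y_{k}$ in $\bar C$; then $\bar y\in\partial C$, for otherwise $\bar y$ would be fixed by $f$, and the associated horofunction $h_{\bar y}$ satisfies $h_{\bar y}\circ f\le h_{\bar y}$. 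The orbit lies in both convex horoballs $\{h\le 0\}$ and $\{h_{\bar y}\le h_{\bar y}(x)\}$, so $\omega$ lies in the intersection of their closures with $\partial C$. A single horoball does not suffice here: already for a triangle, the closure of one Hilbert horoball can meet the relative interiors of two edges at a common vertex. The heart of the proof is therefore to show that, in the plane, the positions of the two boundary points $z$ and $\bar y$ — together with the collinearity constraint $[z,w]\subseteq\partial C$ for all $w\in\omega$ — are incompatible with $\omega$ straddling $z$; as remarked above, if the two constructions produced the same boundary point the conjecture would follow, and two-dimensionality of the extrinsic geometry is rigid enough either to force that situation or to contradict straddling directly.

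The hard part is exactly this last step: deciding which boundary points lie in the closure of a given Hilbert horoball, and ruling out the vertex-straddling configuration by playing the Beardon boundary point against the Karlsson one. The hypothesis $N=2$ is used essentially — there are at most two faces at an extreme point, and the convex curve $\partial C$ is rigid enough that the two boundary points are forced to confine $\omega$ to a single face. In higher dimensions the facial combinatorics around an extreme point is far richer, no analogous interaction between the two boundary points is known, and this is why the conjecture remains open.
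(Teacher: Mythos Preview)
Your outline is exactly the paper's strategy: confine $\omega$ to a two-segment star at the Karlsson--Noskov point $z$ via Theorem~\ref{thm:KN}, introduce Beardon's boundary point $\bar y$ with its $f$-invariant horoballs, and play the two constructions off against each other. The difference is that you stop precisely where the argument must begin. Your last two paragraphs do not exclude the straddling configuration; they declare it ``the hard part'' and then assert that planarity is rigid enough, without supplying any mechanism. As written this is a plan, not a proof, and the missing step is the entire content of the theorem.

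The paper's proof, terse as it is, does supply that step. It asserts first that Beardon's point $\bar y$ must itself lie in the star $\bar F^{+}\cup\bar F^{-}$. If $\bar y$ lands in, say, $F^{+}$ away from the corner, then the $f$-invariance of Beardon's horoballs---convex sets whose closures meet $\partial C$ in the face of $\bar y$---traps the whole orbit, hence $\omega$, inside $\bar F^{+}$, contradicting straddling. If instead $\bar y$ is the corner $z$, the Beardon and Karlsson points coincide; since $z$ is extreme its closed face is the singleton $\{z\}$, and the nested invariant horoballs force $\omega=\{z\}$. Your worry that a single horoball at a vertex can touch both adjacent edges is legitimate, but the paper addresses it through this case split on the location of $\bar y$ rather than by invoking a second horofunction as you suggest. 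A minor further point: your opening reduction to $\tau_{f}=0$ and $\delta_{f}(x)>0$ via the linear-drift theorem and Nussbaum's result is unnecessary and absent from the paper; the star-plus-Beardon argument handles every unbounded orbit uniformly.
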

\begin{proof}
By Karlsson-Noskov we have that there is a star containing all orbit
points. At worst this is made up of two adjacent line segments, suppose
this is the case since otherwise we are done. Beardon's boundary point
must lie in one of these. But no matter in which, since the horoballs
are invariant sets we again get that since the orbit accumulates at
all the faces in the star we get a contradiction, and there can only
be one closed face containing all limit points. In the case Beardon's
point is the corner point, then this coincides with the point in \cite{K01}.
So Beardon's point is a limit point, but in this case the closed face
is just that same point. This proves the conjecture.
\end{proof}

\section{Theorems of Lins and Nussbaum}

Lins showed in his thesis \cite{Li07} that the conjecture is true
for every polyhedral domain. This is particularly interesting since
it seems to be the case most opposite to the one that Beardon handled,
i.e. the strictly convex domains. The proof goes via an isometric
embedding into a finite dimensional Banach space.

Lins and Nussbaum \cite{LiN06} showed that for projective linear
maps a stronger version of the conjecture is true: the orbits converge
to a finite number of points. (The paper \cite{FK05}, which proves
the convergence of geodesics rays, is somewhat analogous to this).
This constitutes a beautiful addendum to the classical Perron-Frobenius
theorem. Further established cases of the conjecture for specific
maps of practical interest can be found in \cite{AGLN06,LiN06,N07,LeN12}.

We conclude by remarking that, in contrast to the linear case of \cite{LiN06},
Lins \cite{Li07} has shown that in some sense the conjecture is best
possible: for any simplex and convex subset of the boundary, he constructs
a semicontraction whose limit set contains this boundary subset.

\noindent Anders Karlsson 

\noindent Section de mathématiques \\
Université de Genève

\noindent 2-4 Rue du Lièvre\\
Case Postale 64

\noindent 1211 Genève 4, 

\noindent Suisse 

\noindent e-mail: anders.karlsson@unige.ch 
\end{document}